\newtheorem{theorem}{Theorem}[section]
\newtheorem{cor}[theorem]{Corollary}
\newtheorem{lem}[theorem]{Lemma}
\newtheorem{prop}[theorem]{Proposition}
\theoremstyle{definition}
\newtheorem{defn}[theorem]{Definition}
\theoremstyle{remark}
\newtheorem{rems}[theorem]{Remarks}
\newcommand{\HH}{\mathrm{H}}
\newcommand{\bZ}{\mathbb{Z}}
\newcommand{\bC}{\mathbb{C}}
\newcommand{\bV}{\mathbb{V}}
\newcommand{\cA}{\mathcal{A}}
\newcommand{\cO}{\mathcal{O}}
\newcommand{\cD}{\mathcal{D}}
\newcommand{\cF}{\mathcal{F}}
\newcommand{\cL}{\mathcal{L}}
\newcommand{\arrow}{\rightarrow}
\newcommand{\rk}{\mathrm{rk}\,}
\newcommand{\Sym}{\mathrm{Sym}}
\newcommand{\sbt}{\,\begin{picture}(-1,1)(-1,-3)\circle*{3}\end{picture}\ }
\title[]{Hyperbolicity of varieties supporting a variation of Hodge structure}
\author{Yohan Brunebarbe, Beno\^{i}t Cadorel}
\date{\today}
\begin{document}

\maketitle

\begin{abstract}
We generalize former results of Zuo and the first author showing some hyperbolicity properties of varieties supporting a variation of Hodge structure. Our proof only uses the special curvature properties of period domains. In particular, in contrast to the former approaches, it does not use any result on the asymptotic behaviour of the Hodge metric.
\end{abstract}

\section{Introduction}

The goal of this note is to give a proof of the following result, which generalizes both \cite[Theorem 0.1]{Zuo_neg} and \cite[Theorem 0.3]{Brunebarbe_Crelle}. Interestingly, whereas the proofs in \textit{op. cit.} use difficult results about degenerations of Hodge structure, the proof given here uses only the special curvature properties of period domains. 

\begin{theorem}\label{main_result}
Let $X$ be a compact complex manifold and $D \subset X$ be a normal crossing divisor. Assume that $X-D$ admits a complex polarized variation of Hodge structure whose period map is immersive at one point. Then
\begin{enumerate}
\item the logarithmic canonical bundle $\omega_X(D) = \det \Omega^1_X(\log D)$ is big, i.e. the variety $X-D$ is of log-general type.
\item the logarithmic cotangent bundle $\Omega^1_X(\log D)$ is weakly-positive and big. 
\end{enumerate}
\end{theorem}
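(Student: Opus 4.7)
The plan is to construct, using only the Higgs field of the variation of Hodge structure together with Griffiths' pointwise curvature formula on the period domain $\mathcal{D}$, a singular Hermitian metric with positive curvature on $\omega_X(D)$ (and analogous singular metrics on symmetric powers of $\Omega^1_X(\log D)$) that is strictly positive on a non-empty open set $U_0\subset X-D$. Bigness and weak positivity then follow from standard volume criteria. Crucially, all positivity is extracted from the interior geometry of $\mathcal{D}$, where the Hodge metric is smooth, so no analysis of the Hodge metric near the boundary divisor $D$ is required.

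Let $\phi\colon U=X-D\to\Gamma\backslash\mathcal{D}$ be the period map and $(E^{p,q}=F^p/F^{p+1},\theta)$ the associated system of graded Hodge bundles with Higgs field $\theta\colon E^{p,q}\to E^{p-1,q+1}\otimes \Omega^1_U$. Griffiths transversality makes $\theta$ equivalent to the map $T_U\to\bigoplus_p\Hom(E^{p,q},E^{p-1,q+1})$ given by the differential of $\phi$, so the immersivity hypothesis makes $\theta$ injective on a non-empty Zariski open $U_0\subset U$. The Deligne canonical extension yields locally free extensions $\overline{E}^{p,q}$ on $X$ and an algebraic extension $\overline{\theta}\colon T_X(-\log D)\to\bigoplus_p\Hom(\overline{E}^{p,q},\overline{E}^{p-1,q+1})$ of the Higgs field. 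Applying $\bigwedge^n$ to $\overline{\theta}$ and dualizing over $U_0$ produces a morphism from a tensor combination $\cE$ of the $\overline{E}^{p,q}$ and their duals that is generically surjective onto $\omega_X(D)$.

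Griffiths' curvature formula on $\mathcal{D}$ expresses the Chern curvature of each $E^{p,q}$ with its Hodge metric in terms of $\theta$ and $\theta^*$; a suitable tensor combination $\cE$ has Hodge metric with semi-positive Chern form on $U$, strictly positive on $U_0$. Taking the quotient metric induced on $\omega_X(D)|_U$, one obtains a singular Hermitian metric with positive Chern form on $U_0$. The main obstacle, and the heart of the argument, is the extension of this metric across $D$ into a closed positive current on all of $X$ in the class $c_1(\omega_X(D))$: traditionally this uses Schmid's nilpotent orbit theorem, but here the plan is to extend by purely potential-theoretic means, exploiting that Griffiths' curvature expression is bounded on $\mathcal{D}$ and that the pulled-back Chern form inherits only logarithmic singularities along $D$, hence is $L^1_{\mathrm{loc}}$; the extension-of-positive-currents theorem across the normal crossing divisor $D$ then applies directly. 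Bigness of $\omega_X(D)$ follows from $\int_X c_1(\omega_X(D))^n>0$, which is strict because the Chern form is smooth and strictly positive on the non-empty open set $U_0$. Part (2) is treated by iterating $\overline{\theta}$: for every $m\geq 1$ one obtains a generically surjective morphism from Hodge bundles to $\Sym^m\Omega^1_X(\log D)$ and hence a positive singular Hermitian metric on it, yielding pseudo-effectivity of all symmetric powers (whence weak positivity) and strict positivity on $U_0$ (whence bigness).
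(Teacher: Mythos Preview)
Your outline follows the Viehweg--Zuo strategy via Higgs bundles and Deligne's canonical extension, which is precisely the approach the paper sets out to bypass. The paper instead pulls back the $G$-invariant metric on the period domain $\cD$ to obtain a (singular) hermitian metric on $T_X|_{X-D}$ with semi-negative curvature and holomorphic sectional curvature $\leq -A$; the extension across $D$ is then achieved by the Ahlfors--Schwarz lemma applied slice by slice to the coordinate disks and punctured disks, which bounds $\Vert z_i\,\partial/\partial z_i\Vert_h$ and $\Vert \partial/\partial z_i\Vert_h$ directly. Bigness of $\omega_X(D)$ and weak positivity then follow from Popovici and P\u{a}un--Takayama, and bigness of $\Omega^1_X(\log D)$ from the K\"ahler property of the pulled-back metric (Lu) together with the argument of \cite{BKT13,Cadorel}.

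There is a genuine gap at the step you yourself flag as ``the heart of the argument''. You assert that the pulled-back Chern form ``inherits only logarithmic singularities along $D$, hence is $L^1_{\mathrm{loc}}$'', invoking only that Griffiths' curvature expression is bounded on $\cD$. But boundedness of the curvature tensor on $\cD$ says nothing about how the image of the period map approaches the boundary of $\cD$; the logarithmic growth of the Hodge norms near $D$ is exactly the content of Schmid's norm estimates, which you are trying to avoid. Without that input, neither the $L^1_{\mathrm{loc}}$ claim nor the applicability of the extension-of-currents theorem is justified. The same objection applies one step earlier: the very existence of the locally free Deligne extensions $\overline{E}^{p,q}$ as \emph{filtered} bundles, and of the logarithmic extension $\overline{\theta}$ of the Higgs field, already relies on the nilpotent orbit theorem (Schmid) to know that the Hodge filtration extends across $D$. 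So your plan, as written, does not avoid the asymptotic analysis; it presupposes it at two separate places.

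The paper's point is that negativity of the holomorphic sectional curvature in horizontal directions is enough, via Ahlfors--Schwarz, to force the required boundedness of the metric on $T_X(-\log D)$ near $D$; this is a one-variable argument and is what replaces Schmid's theory. Your route through the graded Hodge bundles does not have access to an analogous mechanism.
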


Recall that a line bundle $\cL$ on a compact complex manifold $X$ is called big if there exists a positive constant $c$ such that $h^0(X, \cL^{\otimes k}) \geq c \cdot k^{\dim X}$ for $k \gg 1$. More generally, we will say in this article that a vector bundle $\cA$ on $X$ is big if there exists a positive constant $c$ such that  $h^0(X, \Sym^k \cA) \geq c \cdot k^{\dim X + \rk \cA -1}$ for $k  \gg 1$. Also, following Viehweg \cite[Definition 1.2]{Viehweg83}, we say that a vector bundle $\cA$ on a smooth projective variety $X$ is weakly-positive if there exists a Zariski-dense open subset $U \subset X$ such that for every big line bundle $\cL$ on $X$ and every positive integer $\alpha$, there exists a positive integer $\beta$ such that $\Sym^{\alpha \cdot \beta} \cA \otimes \cL^\beta$ is generated by global sections on $U$. More generally, we say that a vector bundle $\cA$ on a Moishezon compact complex manifold $X$ is weakly-positive if there exists a smooth projective variety $X^\prime$ and a bimeromorphic holomorphic map $\nu : X^\prime \arrow X$ such that $\nu^\ast \cA$ is weakly-positive.

\begin{rems}
\begin{enumerate}
\item Note that, contrarily to \cite{Zuo_neg} and \cite{Brunebarbe_Crelle}, we make no assumption on the monodromy at infinity of the local system underlying the variation of Hodge structure. Also, we don't need $X$ to be projective or K\"ahler.
\item If in addition $X$ is assumed to be projective, then the first assertion is a consequence of the second, since the determinant of a weakly-positive and big vector bundle is big (cf. Lemma \ref{det_nef_and_big}). Alternatively, one can also appeal to a result of Campana and P\u aun \cite{Campana-Paun} which says that a projective log-pair with a big logarithmic cotangent bundle is of log-general type. Since we make no projectivity assumption on $X$, we give a direct proof of $(1)$ without knowing $(2)$. 
\item In the situation of the theorem, the logarithmic cotangent bundle $\Omega^1_X(\log D)$ is not big in the sense of Viehweg in general. For example, the logarithmic cotangent bundle of a locally hermitian symmetric variety not covered by the ball is never Viehweg-big. 
\end{enumerate}
\end{rems}

\begin{cor}
Let $X$ be a compact complex manifold. Assume that on a dense Zariski-open subset of $X$ there exists a complex polarized variation of Hodge structure whose period map is immersive at one point. Then $X$ is Moishezon.
\end{cor}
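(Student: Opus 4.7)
The plan is to reduce the corollary to Theorem \ref{main_result} by a log-resolution, and then invoke the standard fact that a compact complex manifold carrying a big line bundle is Moishezon.

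First, let $U \subset X$ be the dense Zariski-open subset on which the polarized complex variation of Hodge structure is defined, and let $Z = X \setminus U$, which is a proper analytic subset of $X$. By Hironaka's theorem on resolution of singularities in the analytic category, applied to the compact complex manifold $X$ and the analytic subset $Z$, there exists a compact complex manifold $X'$ and a bimeromorphic morphism $\pi \colon X' \arrow X$ which is an isomorphism over $U$ and such that $D := \pi^{-1}(Z)_{\mathrm{red}}$ is a normal crossing divisor in $X'$. Pulling back the variation of Hodge structure along $\pi$ gives a polarized complex variation of Hodge structure on $X' - D \cong U$, and since the period map is immersive at one point on $U$, the same is true on $X' - D$.

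Next, I apply Theorem \ref{main_result} to the pair $(X', D)$. Conclusion $(1)$ yields that the logarithmic canonical bundle $\omega_{X'}(D) = \det \Omega^1_{X'}(\log D)$ is big on $X'$; that is, there is a constant $c > 0$ with $h^0(X', \omega_{X'}(D)^{\otimes k}) \geq c \cdot k^{\dim X'}$ for $k \gg 1$. In particular, $X'$ carries a big line bundle, so its algebraic dimension equals $\dim X'$, and therefore $X'$ is Moishezon.

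Finally, since $\pi \colon X' \arrow X$ is a bimeromorphic holomorphic map between compact complex manifolds, and Moishezon-ness is a bimeromorphic invariant of compact complex spaces, it follows that $X$ is Moishezon as well, proving the corollary.

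The main obstacle I expect is purely a matter of invoking the right form of resolution: one must apply Hironaka's theorem in the analytic setting (not the algebraic one), since $X$ is only assumed to be a compact complex manifold, and one needs a resolution relative to a single analytic subset $Z$ producing a simple normal crossing divisor on a compact manifold bimeromorphic to $X$. Once this is in place, everything else is a direct application of Theorem \ref{main_result} together with the standard criterion characterizing Moishezon manifolds via the existence of a big line bundle, together with the bimeromorphic invariance of this notion.
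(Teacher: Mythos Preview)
Your proposal is correct and is exactly the intended argument: the paper does not spell out a proof of this corollary, but it is meant to follow from Theorem~\ref{main_result} via a log-resolution (Hironaka in the analytic category), the implication ``big line bundle $\Rightarrow$ Moishezon'', and bimeromorphic invariance of the Moishezon property---precisely as you outline. The paper makes the key step ``$\omega_X(D)$ big $\Rightarrow$ $X$ Moishezon'' explicit inside the proof of Theorem~\ref{metric_criterion}, so your reasoning matches the paper's.
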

A fortiori, if in addition $X$ admits a K\"ahler metric, then it is necessarily projective.\\

Observe that it follows readily from Theorem \ref{main_result} that any complex polarized variation of Hodge structure on the complex line $\bC$ is constant, as already proved by Griffiths and Schmid \cite[Corollary 9.4]{Griffiths-Schmid}. 
More generally, we obtain as a direct application of Theorem \ref{main_result} that complex algebraic varieties supporting a variation of Hodge structure satisfy the following analogue of the Green-Griffiths-Lang conjecture:

\begin{cor}
Let $U$ be a smooth complex algebraic variety admitting a complex polarized variation of Hodge structure $\bV$. If $\mathrm{Deg}(\bV) \subset U$ denotes the closed (algebraic) subvariety where the period map is not immersive, then:
\begin{itemize}
\item every entire curve of $U$ is contained in $\mathrm{Deg}(\bV)$;
\item every algebraic subvariety of $U$ which is not of log-general type is contained in $\mathrm{Deg}(\bV)$.
\end{itemize}
\end{cor}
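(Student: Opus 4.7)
The plan is to derive both items from Theorem \ref{main_result} by pulling back or restricting $\bV$ to an appropriate log-smooth model, and then contradicting (resp.\ using) the bigness of the log-canonical bundle that the theorem provides.

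For the entire-curve assertion, I would argue by contradiction. Let $f \colon \bC \to U$ be an entire curve (hence non-constant) not contained in $\mathrm{Deg}(\bV)$. Pulling back $\bV$ along $f$ gives a complex polarized VHS $f^*\bV$ on $\bC$ whose period map is $\varphi \circ f$ (on universal covers). Since $f$ is non-constant, the zero locus of $df$ is a proper closed analytic subset of $\bC$, and since $f(\bC) \not\subset \mathrm{Deg}(\bV)$, the preimage $f^{-1}(\mathrm{Deg}(\bV))$ is also a proper closed analytic subset of $\bC$. Hence some $z_0 \in \bC$ satisfies both $df_{z_0}$ injective and $f(z_0) \notin \mathrm{Deg}(\bV)$, whence $d\varphi_{f(z_0)} \circ df_{z_0}$ is injective and $f^*\bV$ has immersive period map at $z_0$. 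Applying Theorem \ref{main_result} to the pair $(X,D) = (\bP^1, \{\infty\})$, so that $X \setminus D = \bC$, yields that $\omega_{\bP^1}(\infty) = \cO_{\bP^1}(-1)$ is big --- a contradiction.

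For the subvariety assertion, let $V \subset U$ be an algebraic subvariety with $V \not\subset \mathrm{Deg}(\bV)$; I want to show $V$ is of log-general type. Choose a Zariski-dense smooth open subset $V^\circ \subset V$ with $V^\circ \not\subset \mathrm{Deg}(\bV)$ (any sufficiently small such open works). At any $p \in V^\circ \setminus \mathrm{Deg}(\bV)$, the differential $d\varphi_p$ is injective on $T_p U$, so its restriction to the subspace $T_p V^\circ \subset T_p U$ is also injective, and thus the period map of the pulled-back VHS $\bV|_{V^\circ}$ is immersive at $p$. Using Hironaka, choose a smooth projective compactification $\tilde V$ of $V^\circ$ with simple normal crossing boundary $D_V := \tilde V \setminus V^\circ$. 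Theorem \ref{main_result}(1) then yields that $\omega_{\tilde V}(D_V)$ is big, i.e.\ $V$ is of log-general type.

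The only substantive obstacle in either part is verifying that the pulled-back or restricted period map is still immersive at some point; but in both cases this reduces to the general fact that the non-immersivity locus of an analytic map and the zero locus of the derivative of a non-constant map are proper closed analytic subsets, whose complements must meet in an open dense set. Once this is done, Theorem \ref{main_result} carries out the rest of the argument.
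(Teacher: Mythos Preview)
Your argument is correct and is precisely the ``direct application'' of Theorem~\ref{main_result} that the paper has in mind; the paper does not write out a proof beyond remarking, just before the corollary, that Theorem~\ref{main_result} forces any complex polarized VHS on $\bC$ to be constant, which is exactly your entire-curve computation with $(X,D)=(\bP^1,\{\infty\})$ and $\omega_{\bP^1}(\infty)=\cO_{\bP^1}(-1)$. The subvariety assertion is handled by the same restriction-and-compactification step you give.
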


\begin{rems}
\begin{enumerate}
\item The period map being holomorphic, it is clear that $\mathrm{Deg}(\bV)$ is a closed analytic subset of $U$. In fact it turns out to be an algebraic subvariety.
\item When $U$ is defined over the ring of integers of a number field, the Lang-Vojta type conjectures predicts that all but finitely many integer points belong to $\mathrm{Deg}(\bV)$. This follows easily from Faltings results when $\bV$ is a $\bZ$-polarized variation of Hodge structure of weight one, but is open otherwise.
\end{enumerate}
\end{rems}

Once we know the curvature properties of period domains (that are recalled in section \ref{Hodge}), Theorem \ref{main_result} is a consequence of the following result of independent interest:

\begin{theorem}\label{metric_criterion} Let $X$ be a compact complex manifold, $D \subset X$ be a normal crossing divisor and $h$ be a singular hermitian metric with semi-negative curvature on the tangent sheaf $T_X|_{X-D}$ restricted to $X-D$. Assume moreover that in restriction to a dense Zariski-open subset of $X-D$ the metric $h$ is smooth and has negative holomorphic sectional curvature, bounded from above by a negative constant. Then the logarithmic canonical line bundle $\omega_X(D)$ is big (so that $X$ is in fact necessarily Moishezon) and the logarithmic cotangent bundle $\Omega^1_X(\log D)$ is weakly-positive. If moreover the metric $h$ is smooth and K\"ahler in the neighborhood of a point of $X-D$, then the logarithmic cotangent bundle $\Omega^1_X(\log D)$ is big.
\end{theorem}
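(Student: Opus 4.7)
The plan is to dualize $h$ to obtain a singular Hermitian metric on $\Omega^1_X(\log D)$ with Griffiths semi-positive curvature, and then deduce the three conclusions from standard positivity machinery for coherent sheaves equipped with such metrics. I would proceed in three stages, following the order: extension, bigness of $\omega_X(D)$, weak positivity and conditional bigness of $\Omega^1_X(\log D)$.

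\emph{Dualizing and extending across $D$.} Since $h$ has Griffiths semi-negative curvature on $T_X|_{X-D}$, the dual metric $h^* := h^{-1}$ on $\Omega^1_X|_{X-D} = \Omega^1_X(\log D)|_{X-D}$ has Griffiths semi-positive curvature. I would first show that $h^*$ extends to a singular Hermitian metric $\tilde h$ on all of $\Omega^1_X(\log D)$ with Griffiths semi-positive curvature, in the sense of Paun--Takayama. Locally near a point of $D$ this reduces to a Hartogs-type extension of plurisubharmonic local weights across the analytic set $D$, which is available thanks to the global semi-negativity of $h$ on the whole of $X-D$ together with the compactness of $X$.

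\emph{Bigness of $\omega_X(D)$.} The determinant $\det \tilde h$ is a singular Hermitian metric on $\omega_X(D)$ whose curvature current $T$ is semi-positive on all of $X$. On the dense Zariski-open subset $U \subset X-D$ where $h$ is smooth with $\mathrm{HSC} \leq -\kappa < 0$, a Royden-type lemma---adapted to Hermitian, not necessarily K\"ahler, metrics via the Chern connection of $h$---shows that the Chern-Ricci form $-\sqrt{-1}\partial\bar\partial \log \det h$ is bounded below by $c \cdot \omega$ for a smooth positive $(1,1)$-form $\omega$ on a relatively compact subset of $U$. Equivalently, $T$ dominates a smooth strictly positive $(1,1)$-form on an open subset of $X$. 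Hence $T$ is a K\"ahler current representing $c_1(\omega_X(D))$, and Demailly's characterization of bigness via K\"ahler currents yields that $\omega_X(D)$ is big. In particular $X$ is Moishezon.

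\emph{Weak positivity and conditional bigness of $\Omega^1_X(\log D)$.} Knowing that $X$ is Moishezon, I would invoke the theorem of Paun--Takayama (building on Berndtsson, Raufi and Hacon--Popa--Schnell): any reflexive coherent sheaf equipped with a singular Hermitian metric of Griffiths semi-positive curvature is weakly positive in the sense of Viehweg. Applied to $(\Omega^1_X(\log D), \tilde h)$, this gives the weak positivity. For bigness under the additional assumption that $h$ is smooth and K\"ahler near a point $p \in X-D$, the classical Royden lemma applies and yields strictly positive Griffiths curvature for $\tilde h$ in a neighborhood of $p$. The tautological line bundle $\cO_{\bP(\Omega^1_X(\log D))}(1)$ then inherits a singular metric with globally semi-positive curvature current which is strictly positive over a neighborhood of $p$, and Demailly's holomorphic Morse inequalities produce enough sections of $\Sym^k \Omega^1_X(\log D)$ to conclude bigness in the sense of the introduction.

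\emph{Main obstacle.} The principal difficulty lies in the HSC-to-Chern-Ricci step of the bigness argument: the classical Royden estimates rely on symmetries of the curvature tensor available only for K\"ahler metrics, and without them one must work directly with the Chern connection to argue that $\mathrm{HSC} \leq -\kappa$ still forces the Chern-Ricci form of a Hermitian metric on $T_X$ to dominate a positive form on $U$. A secondary technicality is the extension in the first step, which is standard for singular metrics on line bundles but requires more care for vector bundles equipped with a metric that may blow up near $D$.
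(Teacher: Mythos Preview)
Your architecture is close to the paper's, but two of the three steps contain genuine gaps, and the obstacle you flag as ``principal'' is in fact a non-issue once the hypotheses are used correctly.

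\textbf{Extension.} Your proposed justification (``Hartogs-type extension \ldots\ thanks to the global semi-negativity of $h$ together with the compactness of $X$'') does not work. Semi-negativity of $h$ on $T_X|_{X-D}$ and compactness alone do not force the norms of sections of the \emph{logarithmic} tangent bundle $T_X(-\log D)$ to stay bounded near $D$; a priori $\Vert z_i\,\partial/\partial z_i\Vert_h$ could blow up. The paper's extension (its Proposition~2.5) is driven entirely by the negative holomorphic-sectional-curvature bound: one restricts to the punctured disk in the $z_i$-direction and applies the Ahlfors--Schwarz Lemma to get
\[
\Vert z_i\,\partial/\partial z_i\Vert_h \;\leq\; \frac{2}{A}\cdot\frac{1}{-\log|z_i|},
\]
which is exactly the boundedness required by the Riemann extension lemma for psh functions. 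So the HSC hypothesis is essential already for the extension, not only for bigness, and your ``secondary technicality'' is really the first substantive step.

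\textbf{Bigness of $\omega_X(D)$.} You plan to use a Royden-type inequality (HSC $\leq -\kappa \Rightarrow$ Ricci bounded below) and correctly identify that Royden's estimate needs the K\"ahler symmetries. The paper sidesteps this entirely: since the hypotheses give \emph{both} Griffiths semi-negativity and HSC $<0$, one simply writes, at a smooth point and for any nonzero $v$ completed to an $h$-orthonormal basis $v=u_1,\dots,u_n$,
\[
\mathrm{tr}(\Theta(h))(v,\bar v)\;=\;\sum_{i=1}^n h(\Theta(h)(v,\bar v)u_i,u_i).
\]
Each summand is $\leq 0$ by semi-negativity, and the $i=1$ term is $<0$ by the HSC hypothesis; no Royden, no K\"ahler symmetry, no obstacle. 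One then concludes bigness via Popovici's volume estimate $\mathrm{vol}\geq\int \Theta_{ac}^n$ (Boucksom in the K\"ahler case), not via ``$T$ is a K\"ahler current'': strict positivity only on an open set does not make $T$ a K\"ahler current, though it does make the integral positive.

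\textbf{Bigness of $\Omega^1_X(\log D)$.} Your plan to pass to $\cO(1)$ on $\bP(T_X(-\log D))$ and use a volume/Morse estimate is exactly what the paper does. However, ``Royden gives strictly positive Griffiths curvature for $\tilde h$ near $p$'' is not correct: negative HSC together with semi-negative bisectional curvature does not yield strictly negative bisectional curvature. What the paper uses instead is a pointwise computation (\cite[Lemma~1.4]{BKT13}): when $h$ is K\"ahler near $x$, there exists a direction $[v]$ at which the curvature of the induced metric on $\cO(1)$ is a strictly positive $(1,1)$-form; this suffices to make $\int\Theta(\hat h)_{ac}^{2n-1}>0$ and hence $\cO(1)$ big. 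The weak-positivity step via P\u{a}un--Takayama matches the paper.
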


\section{Proofs}
\subsection{Preliminaries on variations of Hodge structure and curvature properties of period domains}\label{Hodge}

Recall that a complex polarized variation of Hodge structure on a complex manifold $X$ is the data of a holomorphic vector bundle $\mathscr{E}$ equipped with an integrable connection $\nabla$, a $\nabla$-flat non-degenerate hermitian form $h$ and for all $x \in X$ a decomposition of the fibre $\mathscr{E}_x = \bigoplus_{p \in \bZ} \mathscr{E}^p$ satisfying the following axioms:

\begin{itemize}
\item for all $x \in X$, the decomposition $\mathscr{E}_x = \bigoplus_{p \in \bZ} \mathscr{E}_x^p $ is $h_x$-orthogonal and the restriction of $h_x$ to $ \mathscr{E}_x^p$ is positive definite for $p$ even and negative definite for $p$ odd,
\item  the Hodge filtration $\cF^{\sbt}  := \bigoplus_{ q \geq \sbt} \mathscr{E}^q$ varies holomorphically with $x$, 
\item (Griffiths' transversality) $\nabla( \cF^p) \subset \cF^{p-1} \otimes_{\cO_X} \Omega_X^1$ for all $p$.
\end{itemize}  

Fix $x_0 \in X$, and let $E := \mathscr{E}_{x_0}$, $h := h_{x_0}$ and $r_p = \dim(\mathscr{E}_{x_0}^p)$ for every $p \in \bZ$. The associated period domain $\cD$ is by definition the set of all $h$-orthogonal decompositions $E = \bigoplus_{p \in \bZ} E^p$ with $\dim E^p = r_p$ such that the restriction of $h$ to $E^p$ is positive definite for $p$ even and negative definite for $p$ odd. The group $\mathrm{G} := \mathrm{U}(E,h) \simeq  \mathrm{U}(\sum_{p \text{ odd}} r_p, \sum_{p \text{ even}} r_p)$ acts transitively on $\cD$, with isotropy group $\mathrm{V} = \prod_p \mathrm{U}( h_p) \simeq \prod_p \mathrm{U}(r_p)$. By associating to any decomposition $E = \bigoplus_{p \in \bZ} E^p$ the corresponding filtration $ F^p := \bigoplus_{ q \geq p} E^q$ of $V$, one identifies $\cD$ with an open subset (for the Euclidean topology) of a flag variety. This provides $\cD$ with a canonical structure of complex homogeneous manifold. The data of a polarized complex variation of Hodge structure with ranks $r_p$ on $X$ is then equivalent to the data of its monodromy representation $\pi_1 (X, x_0) \arrow \mathrm{G}$ along with the associated period map, which is a $\pi_1 (X, x_0)$-equivariant holomorphic map $\tilde{p} : \tilde{X} \arrow \cD$ from the universal cover $\tilde{X}$ of $X$ which is tangent to the so-called “horizontal” subbundle of the tangent bundle of $\cD$ (this last condition corresponds to Griffiths' transversality).\\

The period domain $\cD $ admits a canonical $\mathrm{G}$-invariant hermitian metric. Griffiths and Schmid proved that its holomorphic sectional curvatures corresponding to horizontal directions are at most a negative constant \cite[Theorem 9.1]{Griffiths-Schmid}, see also \cite[Theorem 13.3.3]{CMSP}. Since the period map $\tilde{p} : \tilde{X} \arrow \cD$ is always tangent to the “horizontal” subbundle of $T_{\cD}$ and the holomorphic sectional curvature
decreases on submanifolds,  one gets by pulling-back the metric on $\cD$ a canonical hermitian metric $g$ with negative holomorphic sectional curvature on the Zariski-open subset $U \subset X$ where $\tilde{p}$ is an immersion. Peters showed that $g$ has nonpositive holomorphic bisectional curvature on $U$ \cite[Corollary 1.8, Lemma 3.1]{Peters90}. Finally, $g$ is a K\"ahler metric on $U$ (even though the metric on $\cD$ is only a hermitian metric) \cite[Theorem 1.2]{Lu99}.

\subsection{Preliminaries on singular hermitian metrics on vector bundles}

We recall here very briefly the basic definitions concerning singular hermitian metrics of semi-negative curvature on vector bundles, after \cite{Berndtsson-Paun, Paun16, Paun-Takayama, Hacon-Popa-Schnell}.

\begin{defn}
A singular hermitian inner product on a finite-dimensional complex
vector space $V$ is a function $\Vert \cdot \Vert : V  \arrow [0, + \infty ]$ with the following properties:
\begin{enumerate}
\item $\Vert \lambda \cdot v \Vert = |\lambda| \cdot \Vert v \Vert$ for every $\lambda \in \bC \backslash \{0\}$ and every $v \in V$, and $\Vert 0 \Vert = 0$.
\item $\Vert v + w \Vert  \leq \Vert v \Vert + \Vert w \Vert$ for every $v, w \in V$, where by convention $\infty \leq \infty$.
\item $\Vert v + w \Vert^2 + \Vert v - w \Vert^2 = 2 \cdot \Vert v \Vert^2 + 2 \cdot \Vert w \Vert^2$ for every $v, w \in V$.
\end{enumerate}
\end{defn}

Let $V_0$ (resp. $V_{\mathrm{fin}}$) be the subset of $V$ of vectors with zero (resp. finite) norm. It follows easily from the axioms that both $V_0$ and $V_{\mathrm{fin}}$ are linear subspaces of $V$. By definition, $\Vert \cdot \Vert$ is said positive definite if $V_0 = 0$ and finite if $V_{\mathrm{fin}} = V$.

\begin{defn}(compare \cite[Definition 17.1]{Hacon-Popa-Schnell} and \cite[Definition 2.8]{Paun16})
Let $X$ be a connected complex manifold and $\cA$ be a non-zero holomorphic vector bundle on $X$. A singular hermitian metric on $\cA$ of semi-negative curvature is a function $h$ that associates to every point $x \in X$ a singular hermitian inner product $\Vert \cdot \Vert_{h,x} : \cA_x \arrow [0, + \infty ]$ on
the complex vector space $\cA_x$, subject to the following two conditions:
\begin{itemize}
\item $h$ is finite and positive definite almost everywhere, meaning that for all $x$
outside a set of measure zero, $\Vert \cdot \Vert_{h,x}$ is a hermitian inner product on $\cA_x$.
\item the function $x \mapsto \log  \Vert s(x) \Vert_{h,x}$ is plurisubharmonic whenever $U \subset X$ is open and $s \in \HH^0(U, \cA)$.
\end{itemize}
\end{defn}

Note that in particular the singular hermitian inner product $\Vert \cdot \Vert_{h,x}$ is finite at each point of $X$. Conversely, we have the:

\begin{lem}\label{extension of metric with semi-negative curvature} Let $\cA$ be a non-zero holomorphic vector bundle on a connected complex manifold $X$. Let $U$ be an open dense subset of $X$, and $h$ be a singular Hermitian metric with semi-negative curvature on $\cA_{|U}$. The following two assertions are equivalent:
\begin{enumerate}
\item $h$ is a restriction to $U$ of a singular Hermitian metric with semi-negative curvature on $\cA$,
\item the function $x \mapsto   \Vert s(x) \Vert_{h,x}$ is locally bounded in the neighborhood of every point of $X$ for every local section $s \in \HH^0(U, \cA)$.
\end{enumerate}
Moreover, the metric extending $h$ is unique when it exists.
\end{lem}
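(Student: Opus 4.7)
The implication $(1) \Rightarrow (2)$ is immediate: if $\tilde h$ extends $h$ with semi-negative curvature to all of $\cA$, then for every holomorphic section $s$ defined on an open $V \subset X$, the function $\log \Vert s \Vert_{\tilde h}$ is plurisubharmonic on $V$, hence upper semi-continuous and in particular locally bounded above.

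For the converse $(2) \Rightarrow (1)$, I would work locally. Fix $x_0 \in X$ and choose a connected open neighborhood $W$ of $x_0$ on which $\cA$ admits a holomorphic frame $e_1, \ldots, e_r$. For each $\lambda \in \bC^r$, put $s_\lambda := \sum_i \lambda_i e_i \in \HH^0(W, \cA)$. By the semi-negative curvature of $h$ the function $u_\lambda := \log \Vert s_\lambda \Vert_h$ is plurisubharmonic on $W \cap U$, and by hypothesis $(2)$ it is locally bounded above on $W$. Define the candidate extension by the upper semi-continuous regularization
\[
\tilde u_\lambda(x) \;:=\; \limsup_{W \cap U \ni y \to x} u_\lambda(y), \qquad x \in W,
\]
and then set $\Vert s_\lambda(x) \Vert_{\tilde h, x} := \exp(\tilde u_\lambda(x))$ with the convention $e^{-\infty} = 0$. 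The standard extension theorems for plurisubharmonic functions (combined with the local upper bound) yield that $\tilde u_\lambda$ is plurisubharmonic on $W$, and by upper semi-continuity one has $\tilde u_\lambda = u_\lambda$ on $W \cap U$.

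Three verifications remain. (i) For every $x \in W$ the assignment $\lambda \mapsto \exp(\tilde u_\lambda(x))$ must be a singular hermitian inner product on $\cA_x$ (homogeneity is built in, while the triangle inequality and the parallelogram law must be propagated from $W \cap U$ to $W$). (ii) The resulting $\tilde h$ must be independent of the chosen trivialization, so the local constructions glue to a global metric. (iii) For every local holomorphic section $\sigma$ of $\cA$ on $W$ (not only those with constant coefficients in the frame), $\log \Vert \sigma \Vert_{\tilde h}$ must be plurisubharmonic on $W$. All three reduce to the same rigidity principle: two plurisubharmonic functions on $W$ that coincide on the dense open $W \cap U$ coincide on $W$, each being determined by its class in $L^1_{\mathrm{loc}}$ via upper semi-continuous regularization. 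The same principle yields the asserted uniqueness of the extension.

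\textbf{Main obstacle.} The technical heart of the argument is the plurisubharmonic extension from $W \cap U$ to $W$, which relies on a smallness property of $W \setminus (W \cap U)$; in the applications of the paper this complement is contained in a proper analytic subvariety, so that the classical extension theorem for plurisubharmonic functions across analytic sets applies once the local upper bound from $(2)$ is available. Granted this, the remainder of the proof is a systematic exploitation of the fact that plurisubharmonic functions are rigidly determined by their values on dense open subsets.
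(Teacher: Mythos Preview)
Your proposal is correct and follows exactly the paper's approach: the paper's own proof is a one-line appeal to the Riemann extension theorem for plurisubharmonic functions, and you have simply unpacked this. Your observation that the extension step implicitly requires $X\setminus U$ to be negligible for plurisubharmonic extension (e.g.\ contained in a proper analytic subset, as in every application made in the paper) is a pertinent caveat that the paper leaves unspoken.
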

\begin{proof}
This is a direct application of Riemann extension theorem for plurisubharmonic functions.
\end{proof}

\subsection{Proof of Theorem \ref{metric_criterion}}

\begin{prop} \label{extension_of_the_metric} Let $X$ be a complex manifold, $D \subset X$ be a normal crossing divisor and $h$ be a singular hermitian metric with semi-negative curvature on the tangent bundle $T_X|_{X-D}$ restricted to $X-D$. Assume moreover that in restriction to a dense Zariski-open subset the metric $h$ is smooth and has negative holomorphic sectional curvature, bounded from above by a negative constant $-A$. Then $h$ extends uniquely to a singular hermitian metric with semi-negative curvature on the log-tangent bundle $T_X(- \log D)$. 
\end{prop}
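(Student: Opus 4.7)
The plan is to invoke Lemma \ref{extension of metric with semi-negative curvature}, so the task reduces to showing that every local section of $T_X(-\log D)$ has locally bounded $h$-norm (uniqueness will then be automatic). I will work in local coordinates $(z_1,\dots,z_n)$ on a neighborhood $\Omega$ of a point of $D$ in which $D \cap \Omega = \{z_1 \cdots z_k = 0\}$. Since $T_X(-\log D)$ is freely generated on $\Omega$ by $z_1\partial_{z_1},\dots,z_k\partial_{z_k},\partial_{z_{k+1}},\dots,\partial_{z_n}$, it is enough to bound the $h$-norm of each of these vector fields.

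The main analytic ingredient will be an Ahlfors-Schwarz estimate. Given $p = (a_1,\dots,a_n) \in \Omega \setminus D$ and an index $j \leq k$, I will consider the holomorphic disk $\iota_p : \Delta \to \Omega$, $\iota_p(z) := (a_1,\dots,a_{j-1},z,a_{j+1},\dots,a_n)$, rescaled so that $a_j \in \Delta$ and $\iota_p(\Delta) \subset \Omega$; its restriction to $\Delta^* := \Delta \setminus \{0\}$ takes values in $\Omega \setminus D$. Composing with the universal cover $\pi : \bH \to \Delta^*$, I get $\widetilde{\iota}_p := \iota_p \circ \pi : \bH \to \Omega \setminus D$, and pull back $h$ to obtain a singular hermitian metric $\widetilde{\iota}_p^* h$ of semi-negative curvature on $T_\bH$. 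On the preimage of the dense Zariski open locus $V \subset X-D$ where $h$ is smooth with holomorphic sectional curvature bounded above by $-A$, this pullback is smooth with holomorphic sectional curvature bounded above by $-A$ (because holomorphic sectional curvature decreases on submanifolds). Provided $\widetilde{\iota}_p(\bH) \not\subset X \setminus V$, I will apply the Ahlfors-Schwarz-Yau lemma in its form for semi-negatively curved singular metrics to get $\widetilde{\iota}_p^* h \leq A^{-1} g_\bH$, with $g_\bH$ the Poincar\'e metric of constant curvature $-1$. Pushing this down to $\Delta^*$ and evaluating at $z = a_j$ will give
\begin{equation*}
\|z_j \partial_{z_j}\|_h^2(p) \;=\; |a_j|^2\, \|\partial_z\|^2_{\iota_p^* h}(a_j) \;\leq\; \frac{C}{(\log|a_j|)^2},
\end{equation*}
which tends to $0$ as $p \to D$. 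The case $j > k$ is easier: I use the full disk $\iota_p : \Delta \to \Omega \setminus D$, whose image avoids $D$, and the fact that the Poincar\'e density on $\Delta$ is locally bounded yields a uniform local bound on $\|\partial_{z_j}\|_h$.

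This estimate is a priori valid only for $p$ in the Zariski open set $U \subset \Omega \setminus D$ of base points for which $\widetilde{\iota}_p(\bH) \not\subset X \setminus V$; the complement $(\Omega \setminus D) \setminus U$ is a proper analytic subset, hence of Lebesgue measure zero. To extend the bound everywhere, I will use that $\log\|z_j\partial_{z_j}\|_h$ is plurisubharmonic on $\Omega \setminus D$: the sub-mean value inequality then propagates the bound from $U$ to all of $\Omega \setminus D$. Applying Lemma \ref{extension of metric with semi-negative curvature} will then provide the unique extension of $h$ to a singular hermitian metric with semi-negative curvature on $T_X(-\log D)$.

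\emph{The hard part} will be justifying the Ahlfors-Schwarz-Yau estimate in the singular setting: $\widetilde{\iota}_p^* h$ is only smooth with curvature $\leq -A$ off the proper analytic subset $\widetilde{\iota}_p^{-1}(X \setminus V) \subset \bH$, so the classical smooth Ahlfors-Schwarz lemma does not apply as stated. I plan to circumvent this by working directly with the plurisubharmonic function $u(\tau) := \log\|\partial_\tau\|^2_{\widetilde{\iota}_p^* h}$, which satisfies the differential inequality $\Delta u \geq A e^u$ on the smooth locus, and comparing it with the Poincar\'e density via a maximum principle argument that allows the singular analytic subset to be ignored thanks to the plurisubharmonic extension theorem.
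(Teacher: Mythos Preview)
Your approach is essentially the paper's: reduce via Lemma~\ref{extension of metric with semi-negative curvature} to bounding the $h$-norms of the log-tangent frame, then apply Ahlfors--Schwarz on the coordinate (punctured) disks. Two small differences are worth noting. First, what you call ``the hard part'' --- Ahlfors--Schwarz for a singular metric $h_0\,i\,dt\wedge d\bar t$ with $\log h_0$ subharmonic and $i\partial\bar\partial\log h_0 \geq A\,h$ in the sense of currents --- is quoted by the paper directly from Demailly's Santa Cruz notes, so no maximum-principle argument is written out; your lifting to $\bH$ is unnecessary once you have the punctured-disk version. Second, the paper applies the lemma to every coordinate disk without singling out those contained in $X\setminus V$; your observation that the curvature inequality may fail on such disks, together with your remedy via plurisubharmonicity of $\log\lVert z_j\partial_{z_j}\rVert_h$, is a genuine point of rigor that the paper glosses over (alternatively, one can note that for any disk not entirely contained in the analytic set $X\setminus V$ the intersection is discrete, so the current inequality $i\partial\bar\partial\log h_0 \geq A\,h_0\,i\,dt\wedge d\bar t$ still holds, and then handle the remaining measure-zero family of base points exactly as you do).
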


\begin{proof}
In view of Lemma \ref{extension of metric with semi-negative curvature}, one has to prove that for any analytic open subset $U$ of $X$ and any local section $t$ of $T_X(- \log D)$ on $U$, the function 
\[ \Vert t_{|U-D} \Vert_h : U -D \arrow [0, + \infty ), \, x \mapsto \Vert t_{|U-D} (x) \Vert_h \]
is locally bounded. In particular, it is sufficient to show that for $X = \Delta^n$, $D =  \cup_{i=1}^r  \{\underline{z} =(z_1, \cdots, z_n) \in \Delta^n \, | \, z_i = 0 \}$ and $t$ a global section of $T_X(- \log D)$, the function $ x \mapsto \Vert t_{|U-D} (x) \Vert_h $ is bounded in a neighborhood of the origin. Writing $t = \sum_{i=1}^r a_i \cdot z_i  \frac{\partial}{\partial z_i} + \sum_{i=r+1}^n a_i  \cdot \frac{\partial}{\partial z_i} $, where the $a_i$ are holomorphic functions on $\Delta^n$, we see that it is sufficient to prove that $  \Vert  z_i \frac{\partial}{\partial z_i} \Vert_h$ (resp. $  \Vert  \frac{\partial}{\partial z_i} \Vert_h$) are bounded in a neighborhood of the origin for $1 \leq i \leq r$ (resp. for $r+1 \leq i \leq n$).
In the first case, since the holomorphic sectional curvature decreases in submanifold, the Ahlfors-Schwarz Lemma (see below) applied to the punctured unit disk passing through $z$ and directed by $ \frac{\partial}{\partial z_i}(z)$ gives that 

\[    \Vert  z_i \frac{\partial}{\partial z_i} \Vert_h  \leq \frac{2}{A} \cdot \frac{1}{- \log |z_i|} . \]

Similarly, in the second case one gets 

\[    \Vert   \frac{\partial}{\partial z_i} \Vert_h  \leq \frac{2}{A} \cdot \frac{1}{(1- |z_i|^2)} . \]

This finishes the proof.
\end{proof}

\begin{lem}[Ahlfors-Schwarz Lemma, cf. {\cite[lemma 3.2]{Demailly_Santa_Cruz}}]
Let $h(t) = h_0(t)i dt \wedge d \bar t$ be a singular hermitian metric on the unit disk $\Delta$, where $\log h_0$ is a subharmonic function such that $i \partial \bar \partial \log h_0(t) \geq A  \cdot h(t)$ in
the sense of currents, for some positive constant $A$. Then $h$ can be compared with the Poincar\'e metric of $\Delta$ as follows:
\[ h(t) \leq \frac{2}{A} \cdot \frac{|dt|^2}{(1- |t|^2)^2}. \]

Similarly, if $h$ is a singular hermitian metric on the punctured unit disk $\Delta^\ast$ satisfying the same assumptions as above, then the following inequality holds:
\[ h(t) \leq \frac{2}{A} \cdot \frac{|dt|^2}{|t|^2(\log |t|)^2}. \]
 
\end{lem}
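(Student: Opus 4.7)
The strategy is the classical maximum-principle comparison with the Poincar\'e metric, first in a smooth setting, then extended by regularization. \emph{Smooth case.} For each $r \in (0,1)$ let
\[
\lambda_r(t) := \frac{r^2}{(r^2-|t|^2)^2}
\]
be the density of the Poincar\'e metric on $\Delta_r$; a direct computation gives $i\partial\bar\partial \log \lambda_r = 2\lambda_r \cdot i dt \wedge d\bar t$. Set $u_r := \log h_0 - \log \lambda_r - \log(2/A)$. Assuming first that $h_0$ is smooth, the hypothesis combined with the above yields
\[
i\partial\bar\partial u_r \,\geq\, (A h_0 - 2\lambda_r) \cdot i dt \wedge d\bar t
\]
on $\Delta_r$. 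Since $\lambda_r \to +\infty$ at $\partial \Delta_r$, $u_r \to -\infty$ at the boundary and so attains its maximum at some interior point $t_0$. At $t_0$ the (real) Laplacian of $u_r$ is nonpositive, forcing $A h_0(t_0) \leq 2 \lambda_r(t_0)$, i.e.\ $u_r(t_0) \leq 0$; hence $u_r \leq 0$ on $\Delta_r$, and letting $r \uparrow 1$ gives the first bound on $\Delta$.

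\emph{Singular case.} Regularize by convolution with a standard approximation $\chi_\varepsilon$ to the identity: set $h_0^\varepsilon := \exp\bigl((\log h_0) \ast \chi_\varepsilon\bigr)$. Then $h_0^\varepsilon$ is smooth and, by the standard regularization property of subharmonic functions, $h_0^\varepsilon \searrow h_0$ pointwise as $\varepsilon \to 0$. Jensen's inequality in the form $(\exp g) \ast \chi_\varepsilon \geq \exp(g \ast \chi_\varepsilon)$ yields $h_0 \ast \chi_\varepsilon \geq h_0^\varepsilon$, hence
\[
i\partial\bar\partial \log h_0^\varepsilon = (i\partial\bar\partial \log h_0) \ast \chi_\varepsilon \,\geq\, A (h_0 \ast \chi_\varepsilon) \cdot i dt \wedge d\bar t \,\geq\, A h_0^\varepsilon \cdot i dt \wedge d\bar t,
\]
so the smooth case applies to $h_0^\varepsilon$ on any slightly smaller subdisk. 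Passing to the limit $\varepsilon \to 0$ yields the bound for singular $h$. This regularization step, made possible by Jensen's inequality, is the main technical subtlety.

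\emph{Punctured disk.} Pull $h$ back along the universal cover $\pi : \bH \to \Delta^\ast$, $z \mapsto e^{2\pi i z}$; since $\pi$ is a local biholomorphism, $\pi^\ast h$ satisfies the same curvature hypothesis on $\bH$. Transferring the disk bound (via a biholomorphism $\Delta \simeq \bH$) gives $\pi^\ast h \leq (2/A) \cdot |dz|^2/(\mathrm{Im}\, z)^2$. Under $t = e^{2\pi i z}$ one has $|dt|^2 = 4\pi^2 |t|^2 |dz|^2$ and $(\mathrm{Im}\, z)^2 = (\log|t|)^2/(4\pi^2)$, so the pulled-back bound reads $h \leq (2/A) \cdot |dt|^2/(|t|^2 (\log|t|)^2)$ on $\Delta^\ast$, as desired.
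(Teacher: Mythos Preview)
The paper does not give its own proof of this lemma; it is stated with a citation to Demailly's lecture notes and used as a black box. Your argument is correct and is precisely the classical one: Ahlfors' maximum-principle comparison with the Poincar\'e metric on $\Delta_r$, regularization via Jensen's inequality to pass from the smooth to the singular case, and pull-back along the universal cover for the punctured disk. One small remark on constants: the Cayley transform pulls the Poincar\'e density $|dt|^2/(1-|t|^2)^2$ back to $|dz|^2/\bigl(4(\mathrm{Im}\,z)^2\bigr)$ rather than $|dz|^2/(\mathrm{Im}\,z)^2$, so your half-plane bound is actually a factor $4$ better than what you wrote; pushing this through would yield $h \leq (1/(2A))\,|dt|^2/(|t|^2(\log|t|)^2)$ on $\Delta^\ast$, which is the sharp constant. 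The weaker inequality you state is of course still valid and matches the (non-sharp) constant quoted in the lemma, so nothing in your proof needs to change.
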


\textit{End of the proof of Theorem \ref{metric_criterion}.}
We keep the notations of the statement. Thanks to Proposition \ref{extension_of_the_metric}, the log-tangent bundle $T_X(- \log D)$ is endowed with a singular hermitian metric $h$ with semi-negative curvature. This metric induces a singular hermitian metric with semi-negative curvature on the line bundle $\det T_X(- \log D)$. Denoting by $\Theta(h)$ the curvature of $h$ at a point of $X$ where the metric $h$ is smooth, the curvature of the induced metric $\det h$ on $\det T_X(- \log D)$ is given by $\mathrm{tr}(\Theta(h))$. For any nonzero $v \in T^{1,0}_x X$, that we complete in a $h_x$-orthogonal basis $v = u_1, \cdots , u_n$ of $ T^{1,0}_x X$, we have 

\[   \mathrm{tr}(\Theta(h)) (v, \bar v) =  \sum_{i = 1}^n h( \Theta(h)(v , \bar v)(u_i),  u_i ) .\] 

Since by our assumptions $h$ has semi-negative curvature, all the terms in the sum are non-positive. But $h$ is also assumed to have negative holomorphic sectional curvature at $x$, hence the first term $h( \Theta(h)(v)(v , \bar v),  v )$ is negative and the sum is negative. Using \cite[Theorem 1.3]{Popovici08} (which generalizes \cite[Theorem 1.2]{Boucksom} to the non-necessarily K\"ahler manifolds), one gets that the line bundle $\omega(D) = (\det  T_X(- \log D))^{\vee}$ is big.
In particular, $X$ is Moishezon. By taking a modification, one can assume that $X$ is projective (this does not change the conclusions of the Theorem). Since the log-tangent bundle $T_X(- \log D)$ admits a singular hermitian metric $h$ with semi-negative curvature, it follows from a theorem of P{\u a}un and Takayama (cf. {\cite[Theorem 2.21]{Paun16}} and {\cite[Theorem 2.5.2]{Paun-Takayama}}) that the logarithmic cotangent bundle is weakly-positive.

Finally, we prove that $\Omega^1_X(\log D)$ is big if we assume that $h$ is K\"ahler in the neighborhood of a point $x \in X-D$ where it is smooth. The argument is completely similar to what has been done in \cite{Cadorel}, so we will only sketch it briefly. Denote by $\hat{h}$ the metric induced by $h$ on the tautological line bundle $\mathcal O(1) \longrightarrow \mathbb P\left(T_X(- \log D)\right)$. Since $h$ has semi-negative curvature, $\hat{h}$ has positive curvature in the sense of currents, and \cite[Theorem 1.3]{Popovici08} (or \cite[Theorem 1.2]{Boucksom} in the K\"ahler case) implies that 
\begin{equation} \label{ineqvolume}
\mathrm{vol}(\mathcal O(1)) \geq \left( \frac{i}{2 \pi} \right)^{2n-1} \int_{\mathbb P(T_X( - \log D))} {\Theta(\hat{h})}_{ac}^{2n -1}
\end{equation}
where $\Theta(\hat{h})_{ac}$ is the absolutely continuous part of the positive current $\Theta(\hat{h})$. Since $\hat{h}$ is smooth on a Zariski open subset of $\mathbb P(T_X(- \log D)$, the integral in \eqref{ineqvolume} is equal to the integral of $\Theta(\hat{h})^{2n-1}$ on the open set where $\hat{h}$ is smooth. Moreover, since $h$ is K\"ahler in the neighborhood of some point $x \in X-D$, it follows from \cite[Lemma 1.4]{BKT13} that there exists $v \in T_{X,x}$ such that $\Theta(\hat{h})_{(x, [v])}$ is a positive $(1,1)$-form. It follows that the right hand side of \eqref{ineqvolume} is positive, so $\Omega^1_X (\log D)$ is big.

\section{Appendix}

\begin{lem} \label{det_nef_and_big}
If $E$ is a big and weakly-positive vector bundle on a projective variety $X$, then its determinant line bundle $\det E$ is big.
\end{lem}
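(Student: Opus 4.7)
The plan is to translate bigness of $E$ into an injection of an ample line bundle into a symmetric power of $E$, and then to exploit weak positivity of the resulting quotient.

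First, I would identify bigness of $E$ with bigness of the tautological line bundle $\cO_Y(1)$ on $Y := \bP(E)$, where $\pi : Y \to X$ satisfies $\pi_* \cO_Y(k) = \Sym^k E$. Fix any ample line bundle $H$ on $X$. Since $\pi^*H$ is nef on $Y$ and $\cO_Y(1)$ lies in the interior of the big cone, the $\bR$-class $\cO_Y(1) - \varepsilon \pi^* H$ remains big for $\varepsilon > 0$ small enough; passing to a suitable multiple yields positive integers $s,b$ with $\HH^0(Y,\cO_Y(s) \otimes \pi^* H^{-b}) \neq 0$. Pushing forward by $\pi$ produces a non-zero map, hence an injection of coherent sheaves, $H^b \hookrightarrow \Sym^s E$.

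Next, let $\bar A \subset \Sym^s E$ be the saturation of the image of $H^b$. Then $\bar A$ is a line subsheaf containing $H^b$, i.e.\ $\bar A \cong H^b \otimes \cO_X(D)$ for some effective Cartier divisor $D$, and in particular $\bar A$ is big. The quotient $\bar Q := \Sym^s E / \bar A$ is a torsion-free coherent sheaf of rank $\binom{s+r-1}{r-1}-1$, where $r = \rk E$. Symmetric powers and torsion-free quotients of weakly-positive bundles are again weakly-positive in Viehweg's sense, so $\bar Q$ is weakly-positive; a standard determinant computation (applying the weak-positivity definition to $\bar Q$ and comparing the exponents of $\det \bar Q$ and of the auxiliary ample bundle as $\alpha \to \infty$) then shows that the line bundle $\det \bar Q := \det(\bar Q^{**})$ is pseudo-effective.

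Taking determinants in the short exact sequence $0 \to \bar A \to \Sym^s E \to \bar Q \to 0$ yields the isomorphism
\begin{equation*}
(\det E)^{\otimes N_s} \;\cong\; \det(\Sym^s E) \;\cong\; \bar A \otimes \det \bar Q,
\end{equation*}
where $N_s = \tfrac{s}{r}\binom{s+r-1}{r-1}$ is a positive integer. Since the big cone is open and stable under addition of pseudo-effective classes, the tensor product of a big line bundle with a pseudo-effective one is again big. Therefore $(\det E)^{\otimes N_s}$ is big, and hence so is $\det E$. The main technical subtlety is the careful handling of the torsion-free but not generally locally-free quotient $\bar Q$: one has to interpret symmetric powers and determinants via reflexive hulls and verify that these operations interact correctly with Viehweg's weak positivity. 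This is standard in Viehweg's theory but is the only step of the proof requiring more than formal manipulation.
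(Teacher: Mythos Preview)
Your proof is correct and follows essentially the same approach as the paper's: inject an ample (or big) line bundle into a symmetric power of $E$, note that the resulting quotient is weakly-positive, and take determinants to conclude. The only differences are that you spell out how bigness of $E$ produces the injection via $\cO_{\bP(E)}(1)$ and that you saturate the image to make the quotient torsion-free before taking determinants---both are technical refinements the paper leaves implicit.
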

\begin{proof}
Fix $A$ an ample line bundle on $X$. Since $E$ is big, there exists an integer $k \geq 1$ and an injective map of $\cO_X$-modules $ 0 \to A \to \Sym^k E$. This can be completed in an exact sequence of $\cO_X$-modules $ 0 \to A \to \Sym^k E \to Q$. Observe that $Q$, being a quotient of a symmetric power of weakly-positive vector bundle, is weakly-positive too \cite[Lemma 1.4]{Viehweg83}. The previous exact sequence implies that $\det (\Sym^k E) \simeq A \otimes \det Q$. The right hand-side is a big line bundle and the left-hand side can be expressed as a positive tensor power of $\det E$, hence $\det E$ is big.
\end{proof}

\bibliographystyle{alpha}
\bibliography{biblio}

\vspace{0.5cm}

\textsc{Yohan Brunebarbe, Institut f\"ur Mathematik, Universit\"at Z\"urich, Winterthurerstrasse 190, CH-8057 Z\"urich, Schweiz} \par\nopagebreak
  \textit{E-mail address}: \texttt{yohan.brunebarbe@math.uzh.ch}

\vspace{0.5cm}

\textsc{Beno\^{i}t~Cadorel, Aix Marseille Universit\'{e}, CNRS, Centrale Marseille, I2M, UMR~7373, 13453 Marseille, France} \par\nopagebreak
  \textit{E-mail address}: \texttt{benoit.cadorel@univ-amu.fr}
\vfill

\end{document}